\documentclass[11pt]{article}
\usepackage{amsmath,amsthm,amssymb,amsfonts}
\usepackage{mathrsfs}
\usepackage{cite}

\topmargin=-0.4in \oddsidemargin=0.25in
\setlength{\textheight}{8.75in} \setlength{\textwidth}{6.55in}
\setlength{\unitlength}{5 mm}

\newtheorem{theorem}{Theorem}[section]
%[section]

\newtheorem{lemma}[theorem]{Lemma}%[section]
\newtheorem{corollary}[theorem]{Corollary}%[section]
\newtheorem{remark}{Remark}%[section]
\newtheorem{problem}{Problem}%[section]
\newtheorem{conjecture}{Conjecture}[section]
%[section]
%[section]

\title{Spanning rigid subgraph packing and  sparse subgraph covering}
\author{Xiaofeng Gu\thanks{Department of Mathematics,
University of West Georgia, Carrollton, GA 30118, USA; Email: xgu@westga.edu;
Research partially supported by a grant from the Simons Foundation (522728, XG)}
}

\begin{document}
\date{}
\maketitle
\noindent
\begin{abstract}
Rigidity, arising in discrete geometry, is the property of a structure that does not flex.
Laman provides a combinatorial characterization of rigid graphs in the Euclidean plane,
and thus rigid graphs in the Euclidean plane have applications in graph theory.
We discover a sufficient partition condition of packing spanning rigid subgraphs and spanning trees.
As a corollary, we show that a simple graph $G$ contains a packing of $k$ spanning rigid subgraphs 
and $l$ spanning trees if $G$ is $(4k+2l)$-edge-connected, and $G-Z$ is essentially 
$(6k+2l - 2k|Z|)$-edge-connected for every $Z\subset V(G)$.
Thus every $(4k+2l)$-connected and essentially $(6k+2l)$-connected graph $G$
contains a packing of $k$ spanning rigid subgraphs and $l$ spanning trees.
Utilizing this, we show that every $6$-connected and essentially $8$-connected 
graph $G$ contains a spanning tree $T$ such that $G-E(T)$ is $2$-connected.
These improve some previous results. 
Sparse subgraph covering problems are also studied.

\end{abstract}
{\small \noindent {\bf Key words:} Packing, covering, sparse graph, rigid graph, spanning tree}

\section{Introduction}
Rigidity, arising in discrete geometry, is the property of a structure that does not flex. 
A {\bf $d$-dimensional framework} is a pair $(G, p)$, where $G(V, E)$ is a graph and
$p$ is a map from $V$ to $\mathbb{R}^d$. Roughly speaking, it is a straight line realization
of $G$ in $\mathbb{R}^d$. Two frameworks $(G, p)$ and $(G, q)$ are {\bf equivalent} if
$||p(u) - p(v) || = ||q(u) - q(v) ||$ holds for every edge $uv\in E$, where $||\cdot ||$
denotes the Euclidean norm in $\mathbb{R}^d$. Two frameworks $(G, p)$ and $(G, q)$ 
are {\bf congruent} if $||p(u) - p(v) || = ||q(u) - q(v) ||$ holds for every pair $u, v\in V$.
A framework $(G,p)$ is {\bf generic} 
if the coordinates of all the points are algebraically independent over the rationals. 
The framework $(G, p)$ is {\bf rigid} if there exists an $\varepsilon >0$ such that
if $(G, p)$ is equivalent to $(G, q)$ and $||p(u) - q(u) || < \varepsilon$ for every $v\in V$,
then $(G, p)$ is congruent to $(G, q)$. 
For more about rigidity of $d$-dimensional framework, see \cite{Whit96}.
A generic realization of $G$ is rigid in $\mathbb{R}^d$ if and only if every generic 
realization of $G$ is rigid in $\mathbb{R}^d$. 
Hence the generic rigidity can be considered as a property of the underlying graph.
A graph is {\bf rigid} in $\mathbb{R}^d$ if every generic realization of $G$ is 
rigid in $\mathbb{R}^d$ \cite{AsRo78}. Laman~\cite{Lama70} provides 
a combinatorial characterization of rigid graphs in $\mathbb{R}^2$.  By Laman~\cite{Lama70}, 
the equivalent definition of rigid graphs in $\mathbb{R}^2$ is given below.

Let $G(V, E)$ be a graph. 
For a subset $X\subseteq V(G)$, $G[X]$ denotes the subgraph of $G$ induced by $X$.
For a subset $F\subseteq E(G)$, $G[F]$ is the subgraph of $G$ induced by $F$,
while $G(F)$ denotes the spanning subgraph of $G$ with edge set $F$.
For any partition $\pi$ of $V(G)$, $e_G (\pi)$ denotes
the number of edges of $G$ whose ends lie in two different parts of $\pi$.
A part of $\pi$ is {\bf trivial} if the part consists of a single vertex.
A partition is {\bf nontrivial} if it contains no trivial parts.
Undefined graph terminologies can be found in \cite{BoMu08}.

A {\bf packing} of $l$ spanning subgraphs in a graph $G$ means that $G$ has $l$ edge-disjoint 
spanning subgraphs. The following theorem of Nash-Williams and Tutte characterizes graphs with
a packing of $l$ spanning trees. A sufficient edge connectivity condition of spanning
tree packing can be easily obtained from the theorem.

\begin{theorem}[Nash-Williams~\cite{Nash61} and Tutte~\cite{Tutte61}]
\label{NaTu}
Let $l\ge 0$ be an integer.
A graph $G$ has $l$ edge-disjoint spanning trees if and only if
for any partition $\pi$ of $V(G)$, $e_G (\pi)\ge l(|\pi|-1)$.
\end{theorem}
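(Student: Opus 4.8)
The plan is to prove necessity by a direct contraction argument and sufficiency via the matroid union theorem applied to the cycle matroid of $G$. For necessity, let $T_1,\dots,T_l$ be edge-disjoint spanning trees of $G$ and let $\pi=\{V_1,\dots,V_r\}$ be a partition of $V(G)$, so $|\pi|=r$. Contracting each part $V_j$ to a single vertex turns $G$ into a multigraph on $r$ vertices in which each $T_i$ becomes a connected spanning subgraph, hence uses at least $r-1$ of the edges of $G$ that join two different parts of $\pi$. Since the $T_i$ are pairwise edge-disjoint, their sets of crossing edges are disjoint, so $e_G(\pi)\ge l(r-1)=l(|\pi|-1)$.

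For sufficiency, assume the inequality holds for every partition; the case $l=0$ is trivial, so assume $l\ge 1$. I would first observe that $G$ is connected: otherwise the partition $\pi_0$ whose parts are the vertex sets of the components of $G$ has $e_G(\pi_0)=0$ while $l(|\pi_0|-1)\ge l\ge 1$, a contradiction. Let $M(G)$ be the cycle matroid of $G$, with rank function $\rho$; since $G$ is connected, $\rho(E(G))=|V(G)|-1$ and a set of edges is a basis of $M(G)$ precisely when it is a spanning tree of $G$. For any $F\subseteq E(G)$ we have $\rho(F)=|V(G)|-c(G(F))$, where $c(H)$ denotes the number of connected components of $H$. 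By the matroid union theorem, $E(G)$ contains $l$ pairwise disjoint bases of $M(G)$ if and only if $|E(G)\setminus F|\ge l\bigl(\rho(E(G))-\rho(F)\bigr)=l\bigl(c(G(F))-1\bigr)$ for every $F\subseteq E(G)$.

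It therefore remains to deduce $|E(G)\setminus F|\ge l\bigl(c(G(F))-1\bigr)$ from the partition hypothesis. Fix $F\subseteq E(G)$, let $V_1,\dots,V_p$ be the vertex sets of the components of $G(F)$ (so $p=c(G(F))$), and set $\pi=\{V_1,\dots,V_p\}$. Every edge of $G$ joining two distinct parts of $\pi$ lies in $E(G)\setminus F$, since such an edge cannot belong to $F$ (its ends lie in different components of $G(F)$); hence $|E(G)\setminus F|\ge e_G(\pi)\ge l(|\pi|-1)=l(p-1)$, as needed. This produces $l$ edge-disjoint spanning trees of $G$. All the content lies in the sufficiency direction, and within it the main step is the correct invocation of the matroid union theorem; the remainder is bookkeeping translating between partitions of $V(G)$, subsets of $E(G)$, and ranks in the cycle matroid.
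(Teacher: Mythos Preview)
Your proof is correct in both directions. The necessity argument via contraction is standard and clean; the sufficiency argument via the matroid union theorem is also correct, and your translation between the partition condition and the inequality $|E(G)\setminus F|\ge l\bigl(\rho(E(G))-\rho(F)\bigr)$ using the component partition of $G(F)$ is exactly the right bookkeeping.

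There is nothing to compare against, however: the paper does not give its own proof of this theorem. It is stated as the classical Nash--Williams/Tutte result with references to the original papers, and is used as a black box throughout. That said, your matroid-union approach is very much in the spirit of the paper's other arguments: the proofs of Theorems~\ref{rigidcover} and~\ref{parthm} both proceed by computing the rank of a matroid union (of rigidity and circuit matroids) via Edmonds' formula, and your proof is precisely the $k=0$ specialization of that machinery. So while the paper does not supply a proof here, your argument would slot in naturally with its methods.
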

\begin{corollary}
Every $2l$-edge-connected graph contains $l$ edge-disjoint spanning trees.
\end{corollary}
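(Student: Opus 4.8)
The plan is to deduce this directly from Theorem~\ref{NaTu}: it suffices to show that a $2l$-edge-connected graph $G$ satisfies $e_G(\pi)\ge l(|\pi|-1)$ for every partition $\pi$ of $V(G)$. So fix an arbitrary partition $\pi=\{V_1,\dots,V_p\}$ with $p=|\pi|$, and let $c_i$ denote the number of edges of $G$ having exactly one endpoint in $V_i$.

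The key step is a double-counting observation. Every edge counted in $e_G(\pi)$ has its two ends in two distinct parts, hence it is counted by exactly two of the quantities $c_1,\dots,c_p$; therefore $\sum_{i=1}^{p} c_i = 2\,e_G(\pi)$. Now if $p\ge 2$, then for each $i$ we have $\emptyset\ne V_i\ne V(G)$, so the $c_i$ edges with exactly one endpoint in $V_i$ form an edge cut of $G$; by $2l$-edge-connectivity, $c_i\ge 2l$. Summing gives $2\,e_G(\pi)=\sum_{i=1}^p c_i\ge 2lp$, so $e_G(\pi)\ge lp\ge l(p-1)=l(|\pi|-1)$. The remaining case $p=1$ is immediate, since then $e_G(\pi)=0=l(|\pi|-1)$. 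Thus the hypothesis of Theorem~\ref{NaTu} holds, and $G$ has $l$ edge-disjoint spanning trees.

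There is essentially no obstacle here: the entire content is the identity $\sum_i c_i = 2\,e_G(\pi)$ together with the elementary fact that the edge boundary of a proper nonempty vertex subset is an edge cut, so that edge-connectivity bounds each $c_i$ from below. One only needs to be slightly careful about the trivial partition $p=1$ (and, if one insists on $l=0$, everything is vacuous), but no genuine case analysis beyond that is required.
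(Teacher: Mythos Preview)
Your proof is correct and follows the standard route the paper implicitly intends: the corollary is stated without proof immediately after Theorem~\ref{NaTu}, and verifying the partition condition via the double-counting identity $\sum_i c_i = 2\,e_G(\pi)$ together with the edge-connectivity bound $c_i\ge 2l$ is exactly the expected argument (indeed, the paper uses the identical idea later in the proof of Remark~\ref{2p2q}).
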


The packing of spanning trees was then extended to packing of spanning rigid subgraphs 
by \cite{Jord05}.

For a subset $X\subseteq V(G)$, let $i_G(X)$ (or simply $i(X)$ if $G$ can be 
understood from the context) denote the number of edges in $G[X]$.
A graph $G$ is {\bf sparse} if $i_G (X)\le 2|X|-3$ for every $X\subseteq V(G)$
with $|X|\ge 2$. If in addition $|E(G)|=2|V(G)|-3$, then $G$ is {\bf minimally rigid}.
A minimally rigid graph is also called a {\bf Laman graph}. By definition, any sparse graph is simple.
A graph $G$ is {\bf rigid} if $G$ contains a spanning minimally rigid subgraph.
More about rigid graphs will be given in the next section, or see \cite{Gu17, LoYe82, Jord05, JaJo09, ChGS14}.
It is not hard to see that every rigid graph with at least 3 vertices is $2$-connected.
However, the sufficient condition for rigidity is much harder, even in $\mathbb{R}^2$.

\begin{theorem}[Lov\'asz and Yemini \cite{LoYe82}]
\label{LYthm}
Every $6$-connected graph is rigid.
\end{theorem}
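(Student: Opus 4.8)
The statement is equivalent to $r_2(G)=2|V(G)|-3$, where $r_2$ denotes the rank function of the generic two-dimensional rigidity matroid of $K_{|V(G)|}$ restricted to $E(G)$: a graph on $n$ vertices is rigid precisely when this rank equals $2n-3$, a maximum independent set then being the edge set of a spanning minimally rigid subgraph. So the plan is to assume $r_2(G)\le 2n-4$ and reach a contradiction, using two standard facts about the rigidity matroid (to be recorded in the next section): a set of edges is independent if and only if it is sparse, and the Lov\'asz--Yemini rank formula
\[
r_2(G)=\min\Bigl\{\textstyle\sum_{i=1}^{t}\bigl(2|X_i|-3\bigr):\ \{X_1,\dots,X_t\}\ \text{covers}\ E(G)\Bigr\},
\]
where a family of subsets of $V(G)$, each of size at least $2$, \emph{covers} $E(G)$ if every edge has both ends inside some member.

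Take a cover $\mathcal X=\{X_1,\dots,X_t\}$ attaining the minimum, so $\sum_{i}(2|X_i|-3)\le 2n-4$. The first step is to normalise $\mathcal X$. If $|X_i\cap X_j|\ge 2$, then replacing the pair by $X_i\cup X_j$ keeps $\mathcal X$ a cover and strictly decreases the objective; hence in a minimum cover any two members meet in at most one vertex, and consequently every edge of $G$ lies in exactly one member. No $X_i$ equals $V(G)$, for otherwise the objective is at least $2n-3$; hence $t\ge 2$, and as $G$ is connected the auxiliary graph on $\mathcal X$ in which $X_i,X_j$ are adjacent when they share a vertex is connected. Now let $X_1$ be a largest member, put $U:=V(G)\setminus X_1$ and $c:=|U|\ge 1$, and call the other members \emph{small}. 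A small set meets $X_1$ in at most one vertex, so has size at most $c+1$; since $X_1\ne V(G)$, every edge incident to $U$ must be covered by a small set; and deducting the cost $2(n-c)-3$ of $X_1$ leaves the small sets a total cost at most $2c-1$.

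The core of the proof is that this small budget is incompatible with $6$-connectivity. For $u\in U$ the edges at $u$ are partitioned among the small sets containing $u$, one of size $p$ carrying at most $p-1$ of them at cost $2p-3\ge p-1$; since $\deg_G(u)\ge 6$, a short optimisation shows the small sets through $u$ have total cost at least $6$. When $c\le 3$ this already exceeds the whole budget $2c-1$, which is the desired contradiction. For $c\ge 4$ one must also use vertex-connectivity: letting $X\subseteq X_1$ be the set of $X_1$-endpoints of edges leaving $U$, one has $|X|\le t-1$, and $X$ separates $U$ from $X_1\setminus X$, so either $|X|\ge 6$ (forcing $t$ large) or $X=X_1$ (forcing $t\ge|X_1|+1$). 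Combining these dichotomies with the per-vertex cost estimate summed over $U$, with the bound $|E(G)|\ge 3n$, and with the budget $2c-1$, and feeding the result back into $\sum_i(2|X_i|-3)\le 2n-4$, produces the contradiction; the few remaining small values of $n$, where $G$ is forced to be nearly complete, are handled directly.

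I expect the main obstacle to be exactly this last counting for $c\ge 4$: the budget $2c-1$ on its own does not stop the small sets from being moderately large and individually efficient at covering, so it has to be played off carefully against the cut $X$ and against minimum degree, with the case ``no member of $\mathcal X$ is dominant'' treated separately. Making the inequalities close --- which is where the threshold $6$ (rather than $5$) is used --- is the delicate part; normalising $\mathcal X$ and passing through the rank formula are routine.
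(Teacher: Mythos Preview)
The paper does not prove this theorem; it is quoted from \cite{LoYe82} as background, with no argument supplied. (The paper's own Theorem~\ref{parthm} with $k=1$, $l=0$ does recover, via Remark~\ref{2p2q}, the stronger Jackson--Jord\'an statement that every $(6,2)$-connected simple graph is rigid, but through a matroid-union computation with~(\ref{rankKL}) rather than a direct cover argument.) So there is no in-paper proof to compare your sketch against.

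As to the sketch itself: by your own account it is a plan rather than a proof, and the case $c\ge 4$ is left as ``combining these dichotomies \ldots\ produces the contradiction,'' which you flag as the delicate part. That is precisely where the outline breaks. Your per-vertex estimate---that the small sets through a fixed $u\in U$ have total cost at least $6$---is correct, but it cannot simply be summed over $u\in U$: a small set containing several vertices of $U$ would be charged its full cost once for each such vertex, and without controlling that overcount you obtain no usable lower bound on $\sum_{i\ge 2}(2|X_i|-3)$ to set against the budget $2c-1$. Singling out a largest member $X_1$ is also not how the original Lov\'asz--Yemini argument proceeds, and it is unclear the asymmetry can be made to close; the standard line keeps the normalised cover symmetric, apportions each term $2|X_i|-3$ among the vertices of $X_i$, and then invokes $\deg(v)\ge 6$ at every vertex simultaneously.
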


\begin{theorem}[Jord\'an \cite{Jord05}]
\label{Jrigi}
Every $6k$-connected graph contains $k$ edge-disjoint spanning rigid subgraphs.
\end{theorem}

\begin{theorem}[Jackson and Jord\'an \cite{JaJo09}]
A simple graph $G$ is rigid if $G$ is $6$-edge-connected, $G-v$ is $4$-edge-connected 
for every $v\in V(G)$ and $G-\{u, v\}$ is $2$-edge-connected for every $u,v\in V(G)$.
\end{theorem}

\begin{theorem}[Cheriyan, Durand de Gevigney and Szigeti \cite{ChGS14}]
\label{CDSthm}
Let $k\ge 1$ and $l\ge 0$ be integers.
A simple graph $G$ contains edge-disjoint $k$ spanning rigid subgraphs and $l$ spanning trees if
%\\(i) $G$ is $(6k+2l)$-edge-connected;
$G-Z$ is $(6k+2l - 2k|Z|)$-edge-connected for every $Z\subset V(G)$.
\end{theorem}

Theorem~\ref{NaTu} suggests that the partition condition is ``tighter'' than the
connectivity condition.
%Actually in \cite{ChGS14}, the proof of Theorem~\ref{CDSthm} implies a stronger result that
%for every $(6k+2l, 2k)$-connected simple graph $G$ and $F\subset E(G)$ with $|F|\le 3k+l$, 
%$G-F$ contains edge-disjoint $k$ spanning rigid subgraphs and $l$ spanning trees. 
This motivates us to find a ``tighter'' condition for packing rigid subgraphs. 
In Theorem~\ref{parthm},  we discover a sufficient partition condition for packing
spanning rigid subgraphs and spanning trees. We utilize it and obtain a sufficient condition 
involving edge connectivity and essential edge connectivity in Corollary~\ref{parcor},
which is an improvement of Theorem~\ref{CDSthm}. 
Some other neater corollaries are also obtained.

 Let $Z\subset V(G)$ and $\pi$ be a partition of $V(G-Z)$ with $n_0$ 
 trivial parts $v_1, v_2,\cdots,v_{n_0}$.  We define $n_Z(\pi)$ to be
$\sum_{1\le i\le n_0} |Z_i|$ where $Z_i$ is the set of vertices in $Z$ that are adjacent
to $v_i$ for $1\le i\le n_0$. If $Z=\emptyset$, then define $n_Z(\pi) =0$.

\begin{theorem}
\label{parthm}
Let $k\ge 1$ and $l\ge 0$ be integers. 
A simple graph $G$ contains edge-disjoint $k$ spanning rigid subgraphs and $l$ spanning trees if
%\\(i) for any partition $\pi$ of $V(G)$ with $n_0$ trivial parts, $e_G (\pi)\ge (3k+l)(|\pi|-1) - k n_0$;
for  any partition $\pi$ of $V(G-Z)$ with $n_0$ trivial parts, 
$e_{G-Z} (\pi)\ge (3k+l)(|\pi|-1) - k n_0 - k|Z|(|\pi|-n_0) - n_Z(\pi)$, for every $Z\subset V(G)$.
\end{theorem}

We must point out that our proof of Theorem~\ref{parthm} is a standard matroid proof using
rank functions, which is similar to \cite{ChGS14, Jord05, Gu17}. We use graph properties and 
a more careful counting to simplify the proof and improve the bounds.

A graph $G$ is {\bf essentially $p$-edge-connected} if $d_G(U)\ge p$ for every partition 
$(U,  V\backslash U)$ of $V(G)$ with $2\le |U| \le |V(G)|-2$ and each part inducing a subgraph 
with at least one edge,  where $d_G(U)$ is the number of  edges between $U$ and 
$V\backslash U$ in $G$.
A graph $G$ with at least $p+1$ vertices is {\bf essentially $p$-connected} if there is no 
$X\subset V(G)$ with $|X|< p$ such that at least two components of $V-X$ are nontrivial,
where a nontrivial component means it contains at least one edge.

\begin{corollary}
\label{parcor}
Let $k\ge 1$ and $l\ge 0$ be integers. %Let $G$ be a graph with multiplicity at most $k$.
A simple graph $G$ contains edge-disjoint $k$ spanning rigid subgraphs and $l$ spanning trees if
$G$ is $(4k+2l)$-edge-connected, and
$G-Z$ is essentially $(6k+2l - 2k|Z|)$-edge-connected for every $Z\subset V(G)$.
\end{corollary}

\begin{corollary}
If a simple graph $G$ is $4k$-edge-connected and essentially $6k$-edge-connected,
$G-v$ is essentially $4k$-edge-connected for every $v\in G$, and $G-u-v$ is essentially
$2k$-edge-connected for every $u, v\in G$, then $G$
contains $k$ edge-disjoint spanning rigid subgraphs.
\end{corollary}

A sufficient condition involving ``essentially 6-connected'' for rigidity is shown in \cite{JaSS07}, 
as an improvement of Theorem~\ref{LYthm}. We extends this result to a packing of spanning
rigid subgraphs and spanning trees in the following corollary.

\begin{corollary}
\label{esscon}
Every $(4k+2l)$-connected and essentially $(6k+2l)$-connected graph $G$
contains edge-disjoint $k$ spanning rigid subgraphs and $l$ spanning trees.
\end{corollary}

Kriesell conjectures that there exists a (smallest) integer $f(p)$ such that every $f(p)$-connected
graph $G$ has a spanning tree $T$ such that $G-E(T)$ is $p$-connected (see \cite{Jord05}).
By Theorem~\ref{Jrigi}, Jord\'an \cite{Jord05} shows that every 12-connected graph $G$ contain
a spanning tree $T$ such that $G-E(T)$ is $2$-connected. This is  improved by
Cheriyan, Durand de Gevigney and Szigeti \cite{ChGS14} who show that every 8-connected 
graph $G$ contain a spanning tree $T$ such that $G-E(T)$ is $2$-connected.
By Corollary~\ref{esscon}, we have the following result.
\begin{corollary}
Every $6$-connected and essentially $8$-connected graph $G$
contains a spanning tree $T$ such that $G-E(T)$ is $2$-connected.
\end{corollary}

Necessary conditions for packing rigid subgraphs are also investigated. This appears in Section 4.
In Section 5, we study the sparse subgraph covering problems, including a NDT-type theorem 
and an open question.

We would like to mention that Theorems~\ref{parthm} and \ref{rigidcover} have 
important applications in spectral graph theory. 
Utilizing them, in a subsequent paper, we will study spectrum of graphs
and rigidity in the Euclidean plane, and discover the spectral conditions for 
rigid subgraph packing and sparse subgraph covering.

%%%%%%%%%%%%%%%%%%
\section{Preliminaries}
In this section, we present some basic results on rigid graphs and rigidity matroids.
For readers who are interested in more about the relationship between the studies of
rigidity and matroids, we refer to the monograph \cite{GrSS93}.

Suppose that $G=(V,E)$ is a graph with $|V(G)|=n$. Let $\mathcal{F}$ 
be the collection of all edge subsets each of which induces a forest.
Then $\mathcal{F}$ forms the collection of independent sets of a matroid on ground set $E$.
The {\bf circuit matroid} $\mathcal{M}(G)$ of $G$ is the matroid $(E,\mathcal{F})$.
The rank function of $\mathcal{M}(G)$ is given by $r_{\mathcal M}(F) = n - c(F)$,
where $c(F)$ denotes the number of components of $G(F)$.

For any subset $X\subseteq V$ and $F\subseteq E$, $E_F(X)$ and $i_F(X)$ denotes the set
and the number of edges of $F$ in $G[X]$, respectively. 
A subset $S\subseteq E$ is {\bf sparse} if $i_S(X)\le 2|X|-3$ for all
$X\subseteq V$ with $|X|\ge 2$. Let $\mathcal{S}$ be the collection of all sparse sets of $G$.
Then $\mathcal{S}$ forms the collection of independent sets of a matroid on ground set $E$.
The matroid $(E, \mathcal{S})$ is the {\bf rigidity matroid} of $G$, denoted by $\mathcal{R}(G)$.
By Lov\'asz and Yemini \cite{LoYe82}, the rank function of $\mathcal{R}(G)$ is

\begin{equation}
\label{rankrm}
r_{\mathcal R}(F)=\min\left\{\sum_{X\in\mathcal{G}} (2|X|-3) \right\},
\end{equation}
where the minimum is taken over all collections $\mathcal G$ of subset $X\subseteq V$ such that
$\{E_F(X)| X\in\mathcal{G}\}$ partitions $F$. Each $X\in\mathcal{G}$ induces a rigid subgraph
of $G(F)$ (see \cite{ChGS14} or the proof of Lemma 2.4 in \cite{JaJo05}).
By definition, a graph $G$ is rigid if and only if the rank of $\mathcal{R}(G)$ is $2|V(G)|-3$.

\iffalse%%%%%%%
When $G$ is simple, a variation of (\ref{rankrm}) is given in \cite{ChGS14} as
\begin{equation}
\label{rankR}
r_{\mathcal R}(F)=\min\left\{\sum_{X\in\mathcal{X}} (2|X|-3) + |F-H| \right\},
\end{equation}
where the minimum is taken over all subset $H\subseteq F$ and all collections $\mathcal{X}$
of subset $X\subseteq V$ such that $\{E_F(X)| X\in\mathcal{X}\}$ partitions $H$
and each element of $\mathcal{X}$ induces a rigid subgraph of $G[H]$ of size at least $3$.
\fi%%%%%%%%

\begin{remark}[\cite{Gu17}]
\label{ransub}
Let $\mathcal G$ be a collection that realizes the minimum of the right side of (\ref{rankrm}), 
and $\mathcal Y\subseteq \mathcal G$. Then 
$r_{\mathcal R} \left(\cup_{X\in\mathcal{Y}} E_F(X)\right) = \sum_{X\in\mathcal{Y}} (2|X|-3)$.
\end{remark}

As in \cite{ChGS14}, $\mathcal{N}_{k,l}(G)$ is the matroid on ground set $E$ obtained by
taking matroid union of $k$ copies of the rigidity matroids $\mathcal{R}(G)$ and $l$ copies
of circuit matroids $\mathcal{M}(G)$. By a theorem of Edmonds on the rank 
of matroid union \cite{Edmo68}, the rank of $\mathcal{N}_{k,l}(G)$ is

\begin{equation}
\label{rankKL}
r_{k,l}(E)=\min_{F\subseteq E} \left\{kr_{\mathcal R}(F) + lr_{\mathcal M}(F) + |E-F| \right\}.
\end{equation}
Thus $r_{k,l}(E)\le  kr_{\mathcal R}(E) + lr_{\mathcal M}(E)= k(2n-3) + l(n-1)$.

%%%%%%%%%%%%%%%%%%
\section{Proof of the packing theorem}
In this section, we prove Theorem \ref{parthm} and Corollary~\ref{parcor}.

\begin{proof}[\bf Proof of Theorem~\ref{parthm}]
It suffices to show that the rank of $\mathcal{N}_{k,l}(G)$ is
\[
r_{k,l}(E) = k(2n-3) + l(n-1).
\]
Choose $F\subseteq E$ to be a set with smallest size that minimizes the right side of 
(\ref{rankKL}), then 
\begin{equation}
\label{asrank}
r_{k,l}(E)= kr_{\mathcal R}(F) + lr_{\mathcal M}(F) + |E-F|.
\end{equation}
By (\ref{rankrm}), there exists a collection $\mathcal{X}$ of subset $X\subseteq V$
such that $\{E_F(X)| X\in\mathcal{X}\}$ partitions $F$ and
\begin{equation}
\label{rsrank}
r_{\mathcal R}(F)=\sum_{X\in\mathcal{X}} (2|X|-3).
\end{equation}

\noindent
{\bf Claim 1.} For each $X\in\mathcal{X}$, $|X|\ge 3$. \\
{\bf Proof of Claim 1.}  If not, then let $\mathcal{X}'$ denote
the collection of $X\in\mathcal{X}$ with $|X|=2$. Then
$r_{\mathcal R}(F)=\sum_{X\in\mathcal{X-X'}} (2|X|-3)+\sum_{X\in\mathcal{X'}} (2|X|-3)
=\sum_{X\in\mathcal{X-X'}} (2|X|-3)+|\mathcal{X'}|$. Let $H\subset F$ be the set of edges 
by deleting all edges induced by each $X$ with $|X|=2$. Then $\mathcal{X-X'}$ is the collection
of $X\subseteq V$ that partition $H$. By (\ref{rankrm}), 
$r_{\mathcal R}(H)\le \sum_{X\in\mathcal{X-X'}} (2|X|-3)$. 
As $G$ is simple, $|F-H|\le |\mathcal{X'}|$. Thus
$kr_{\mathcal R}(H) + lr_{\mathcal M}(H) + |E-H|\le k\sum_{X\in\mathcal{X-X'}} (2|X|-3)
+ lr_{\mathcal M}(F) + |E-F| + |F-H|\le 
k\sum_{X\in\mathcal{X-X'}} (2|X|-3)+ lr_{\mathcal M}(F) + |E-F| + |\mathcal{X'}|
\le kr_{\mathcal R}(F) + lr_{\mathcal M}(F) + |E-F|$, which is contrary to the minimality of $F$.
This completes the proof of the claim.\\

\noindent
{\bf Claim 2.} For every $\mathcal{Y}\subseteq \mathcal{X}$, there is a vertex
that is contained in a single element of $\mathcal{Y}$.\\
{\bf Proof of Claim 2.} 
If not, then every vertex is contained in at least two elements of $\mathcal{Y}$.
Let $n_\mathcal{Y}$ be the number of vertices in all elements of $\mathcal{Y}$. Then
$\sum_{X\in\mathcal{Y}}|X|\ge 2 n_\mathcal{Y}$. By Remark~\ref{ransub} and Claim 1, we have
\begin{eqnarray*}
2 n_\mathcal{Y} -3 
&\ge &  r_{\mathcal R} \left(\cup_{X\in\mathcal{Y}} E_F(X)\right) = \sum_{X\in\mathcal{Y}} (2|X|-3)\\
& = & \sum_{X\in\mathcal{Y}} |X| + \sum_{X\in\mathcal{Y}} (|X|-3)\\
& \ge & 2 n_\mathcal{Y} +0,
\end{eqnarray*}
a contradiction. This proves the claim.\\

Let $|V(G[F])|=n_1$ and $n_2=n-n_1$. Then there are $n_2$ isolated vertices in $G(F)$.
For each $X\in\mathcal{X}$, define $X_B = X\cap (\cup_{X\neq Y\in\mathcal{X}}Y)$ and $X_I=X-X_B$.
Let $\mathcal{I_X}=\{X\in\mathcal{X}: X_I\neq\emptyset\}$.
As each $X\in\mathcal{X}$ induces a connected subgraph of $G(F)$, it is not hard to see
\begin{equation}
\label{Fcomp}
c(F)\le |\mathcal{I_X}| + n_2.
\end{equation}
Proof of (\ref{Fcomp}).
Let $H$ be any connected component of $G(F)$ that is not an isolated vertex.
This $H$ is called a nontrivial component.
Each $X\in\mathcal{X}$ induces a connected subgraph of $G(F)$
and thus $H$ actually is a subgraph of $G(F)$ induced by some elements $X$'s of $\mathcal{X}$. 
Let $\mathcal{Y}$ be the collection of these $X$'s, and thus  $\mathcal{Y}\subseteq \mathcal{X}$. 
By Claim 2, there is a vertex $v$ in $V(H)$ that is contained in a single element of $\mathcal{Y}$. 
By definition, $v\in X_I$ and thus $X_I\neq\emptyset$. 
This shows that every nontrivial component of $G(F)$ contains an $X$ such that $X_I\neq\emptyset$.
Hence $G(F)$ contains at most $|\mathcal{I_X}|$ components that are not isolated vertices, 
which implies that $c(F)\le |\mathcal{I_X}| + n_2$ and completes the proof of (\ref{Fcomp}).
\\

Since $\mathcal{X}$ covers $F$ and thus covers all vertices of $G[F]$, each vertex of $X_B$
lies in at least two different $X\in\mathcal{X}$ and each $X_I$ is in a single $X$,
we have $\sum_{X\in\mathcal{X}}|X_B| + 2\sum_{X\in\mathcal{I_X}}|X_I|\ge 2n_1$, which implies
\begin{equation}
\label{covnum}
\sum_{X\in\mathcal{X}}|X| + \sum_{X\in\mathcal{I_X}}|X_I|\ge 2n_1.
\end{equation}

Now we will use the partition condition to show a lower bound of $|E-F|$.
Let $Z=\cup_{X\in\mathcal{X}} X_B$. Then $\{X_I: X\in\mathcal{I_X}\}$ together with all isolated
vertices of $G(F)$ form a partition $\pi$ of $G-Z$ with at least $n_2$ trivial parts and 
$|\pi|=|\mathcal{I_X}|+n_2$. Without loss of generality, we may assume there are exactly $n_2$
trivial parts (since this is the worst case for $e_{G-Z}(\pi)$).
Possibly there are  edges between trivial parts and $Z$. These
edges belong to $E-F$. Let $b$ be the number of those edges. Thus $n_Z(\pi)=b$, and we have
\begin{eqnarray}
\label{EminusF}
|E-F| \nonumber
&\ge & e_{G-Z}(\pi) + b \ge (3k+l)(|\pi|-1)- kn_2 - k|Z|(|\pi|-n_2) - n_Z(\pi) + b\\
& = & k\sum_{X\in \mathcal{I_X}}(3-|X_B|)+2kn_2 -3k + l(|\mathcal{I_X}| + n_2 -1) 
\end{eqnarray}

By (\ref{asrank}), (\ref{rsrank}), (\ref{Fcomp}), (\ref{covnum}), (\ref{EminusF}) and Claim 1,
\begin{eqnarray*}
r_{k,l}(E)
& =  & k\sum_{X\in\mathcal{X}} (2|X|-3) + l(n-c(F)) + |E-F|\\
&\ge & k(\sum_{X\in\mathcal{X}}|X| + \sum_{X\in\mathcal{I_X}} (|X|-3)) + l(n-c(F))\\
&    & + k\sum_{X\in \mathcal{I_X}}(3-|X_B|)+2kn_2 -3k + l(|\mathcal{I_X}| + n_2 -1)\\
& =  & k(\sum_{X\in\mathcal{X}}|X| + \sum_{X\in\mathcal{I_X}}|X_I| +2n_2 -3)
       + l(n-c(F)+ |\mathcal{I_X}| + n_2 -1)\\
&\ge & k(2n_1 + 2n_2 -3) + l(n-1) + l(|\mathcal{I_X}| + n_2 -c(F))\\
&\ge & k(2n-3) + l(n-1).
\end{eqnarray*}
As $r_{k,l}(E)\le k(2n-3) + l(n-1)$, it turns out that $r_{k,l}(E)= k(2n-3) + l(n-1)$.
\end{proof}

\vspace{.1cm}

\begin{proof}[\bf Proof of Corollary~\ref{parcor}]
By Theorem \ref{parthm},
it suffices to show that for  any partition $\pi$ of $V(G-Z)$ with $n_0$ trivial parts, 
$e_{G-Z} (\pi)\ge (3k+l)(|\pi|-1) - k n_0 - k|Z|(\pi -n_0) - n_Z(\pi)$, for every $Z\subset V(G)$.
Let $u_j$ be a trivial part (single vertex), then $d(u_j)\ge 4k+2l -|Z_j|$, where $Z_i$ is 
the set of vertices in $Z$ that are adjacent to $u_i$. 
Let $V_i$ be a nontrivial part in the partition. If $V_i$ induces at least one edge in $G-Z$, then 
$d(V_i)\ge 6k+2l - 2k|Z|$ by the essential edge connectivity. 
If $V_i$ is a independent set of $G-Z$, then
$d(V_i)\ge |V_i|(4k+2l -|Z|)\ge 2(4k+2l -|Z|) \ge 6k+2l - 2k|Z|$. 
Thus $d(V_i)\ge 6k+2l - 2k|Z|$. Hence
\begin{eqnarray*}
e_{G-Z} (\pi) 
&\ge & \frac{1}{2}\sum_1^{|\pi|-n_0} (6k+2l - 2k|Z|) + \frac{1}{2}\sum_1^{n_0} (4k+2l -|Z_j|)\\
&\ge & (3k+l)|\pi| - k n_0 - k|Z|(\pi -n_0) - \frac{1}{2}\sum_1^{n_0}|Z_j| \\
&\ge & (3k+l)(|\pi|-1) - k n_0 - k|Z|(\pi -n_0) - n_Z(\pi),
\end{eqnarray*}
which completes the proof.
\end{proof}

%%%%
\section{Necessary conditions}

Theorem~\ref{necondi} presents a necessary partition condition. 
As corollaries, we obtain some properties of rigid graphs. 
\begin{theorem}
\label{necondi}
Let $k\ge 0$ and $l\ge 0$ be integers.
If a graph $G$ contains edge-disjoint $k$ spanning rigid subgraphs and $l$ spanning trees,
then for any partition $\pi$ of $V(G)$ with $n_0$ trivial parts, 
$e_G (\pi)\ge (3k+l)(|\pi|-1) - k n_0$.
\end{theorem}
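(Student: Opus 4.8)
The plan is to bound, for a fixed partition $\pi=\{V_1,\dots,V_t\}$ of $V(G)$, the number of crossing edges contributed by each of the $k$ spanning rigid subgraphs $R_1,\dots,R_k$ and each of the $l$ spanning trees $T_1,\dots,T_l$ separately, and then to add these bounds up using that all $k+l$ subgraphs are pairwise edge-disjoint. Write $t=|\pi|$, let $n$ be the number of nontrivial parts and $n_0$ the number of trivial parts, so that $t=n+n_0$ and $\sum_{|V_s|\ge 2}|V_s|=|V(G)|-n_0$. Since $(3k+l)(|\pi|-1)-kn_0=k\bigl(3(|\pi|-1)-n_0\bigr)+l(|\pi|-1)$, it suffices to show that each $R_i$ has at least $3(|\pi|-1)-n_0$ crossing edges and each $T_j$ has at least $|\pi|-1$ crossing edges.

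The bound for a spanning tree is the classical contraction argument: contracting each part of $\pi$ to a single vertex turns $T_j$ into a connected (multi)graph on $|\pi|$ vertices, whose edges are precisely the edges of $T_j$ crossing $\pi$, so there are at least $|\pi|-1$ of them.

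For $R_i$, choose a spanning minimally rigid subgraph $M_i\subseteq R_i$; by definition $M_i$ is sparse and $|E(M_i)|=2|V(G)|-3$. The number of edges of $M_i$ crossing $\pi$ is $|E(M_i)|-\sum_{s=1}^{t} i_{M_i}(V_s)$. A trivial part contributes $i_{M_i}(V_s)=0$, while a nontrivial part contributes at most $2|V_s|-3$ by sparseness; hence $\sum_{s=1}^{t} i_{M_i}(V_s)\le 2(|V(G)|-n_0)-3n$, and the number of crossing edges of $M_i$ is at least $(2|V(G)|-3)-2(|V(G)|-n_0)+3n=3n+2n_0-3=3(|\pi|-1)-n_0$. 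Since $E(M_i)\subseteq E(R_i)$, the same lower bound holds for the crossing edges of $R_i$.

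Because $R_1,\dots,R_k,T_1,\dots,T_l$ are pairwise edge-disjoint, their sets of crossing edges are disjoint subsets of the set of edges of $G$ crossing $\pi$, so $e_G(\pi)\ge k\bigl(3(|\pi|-1)-n_0\bigr)+l(|\pi|-1)=(3k+l)(|\pi|-1)-kn_0$, as required. The argument has no serious obstacle; the only subtlety is that the sparseness bound $2|V_s|-3$ is useless (indeed negative) for a singleton part, which is precisely why trivial parts must be isolated and why the correction term $-kn_0$ appears. Note also that taking $k=0$ recovers the necessity direction of the Nash-Williams--Tutte theorem.
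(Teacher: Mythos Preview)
Your proof is correct and follows essentially the same approach as the paper: both arguments pass to $k$ spanning minimally rigid subgraphs and $l$ spanning trees, use the sparseness bound $i(V_s)\le 2|V_s|-3$ on the nontrivial parts (and $0$ on the trivial ones), and subtract from the total edge count to bound the crossing edges. The only cosmetic difference is that the paper aggregates the $k+l$ subgraphs into a single spanning subgraph $S$ with $|E(S)|=k(2n-3)+l(n-1)$ and bounds $|E(S[V_i])|\le k(2|V_i|-3)+l(|V_i|-1)$ in one step, whereas you handle each $R_i$ and $T_j$ separately before summing.
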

\begin{proof}[\bf Proof of Theorem \ref{necondi}]
Let $S$ be a spanning subgraph of $G$ that consists of edge-disjoint $k$ spanning minimally 
rigid subgraphs and $l$ spanning trees. By definition, $|E(S)|=k(2n-3)+l(n-1)$, where $n=|V(G)|$.
Let $\pi=\{V_1,V_2,\cdots,V_t,\cdots,V_{t+n_0}\}$ be a partition of $V(G)$ such that
$V_i$ is nontrivial for $1\le i\le t$ and trivial for $t+1\le i\le t+n_0$.
Thus $\sum_{1\le i\le t}|V_i| = n - n_0$ and $t=|\pi|-n_0$.
For $1\le i\le t$, $|E(S[V_i])|)\le k(2|V_i|-3)+l(|V_i|-1)$. Then 
\begin{eqnarray*}
e_G(\pi)
&\ge & e_S(\pi)=|E(S)|-\sum_{1\le i\le t}|E(S[V_i])| \\
&\ge & k(2n-3)+l(n-1)-\sum_{1\le i\le t}(k(2|V_i|-3)+l(|V_i|-1))\\
& =  & k(2n-3)+l(n-1)-k(2n-2n_0-3t)-l(n-n_0-t)\\
& =  & k(2n_0 + 3t -3) + l(n_0 +t -1)\\
& =  & k(2n_0 + 3|\pi|-3n_0 -3) + l(n_0 +|\pi| -n_0 -1)\\
& =  & (3k+l)(|\pi|-1) - k n_0. 
\end{eqnarray*}
\end{proof}

\begin{corollary}
Every rigid graph is essentially $3$-edge-connected.
\end{corollary}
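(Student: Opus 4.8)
The plan is to obtain this as the special case $k=1$, $l=0$ of Theorem~\ref{necondi}. First I would observe that if $G$ is rigid then, by definition, $G$ contains a spanning minimally rigid subgraph; a minimally rigid graph is in particular a rigid graph on $V(G)$, so $G$ contains one spanning rigid subgraph and (trivially) zero spanning trees, and these are edge-disjoint. Hence the hypothesis of Theorem~\ref{necondi} is satisfied with $k=1$ and $l=0$.

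Next I would take an arbitrary nontrivial partition $\pi=\{A,B\}$ of $V(G)$ with $|\pi|=2$. Since $\pi$ is nontrivial we have $|A|\ge 2$ and $|B|\ge 2$, so the number $n_0$ of trivial parts of $\pi$ is $0$. Applying Theorem~\ref{necondi} then gives
\[ e_G(\pi)\ \ge\ (3k+l)(|\pi|-1)-k\,n_0\ =\ 3(2-1)-0\ =\ 3. \]
As $\pi$ was an arbitrary nontrivial $2$-partition of $V(G)$, this says precisely that $G$ is essentially $3$-edge-connected.

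There is no real obstacle here: all the content lies in Theorem~\ref{necondi}, and the only point requiring (minimal) care is matching the notion of a ``rigid graph'' to the input ``$k$ edge-disjoint spanning rigid subgraphs and $l$ spanning trees'' with $k=1$, $l=0$, which is immediate from the definition of rigidity. (If $|V(G)|\le 3$ the statement is vacuously true, since then $G$ has no nontrivial partition into two parts.)
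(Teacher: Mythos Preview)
Your proof is correct and follows exactly the paper's own argument: apply Theorem~\ref{necondi} with $k=1$, $l=0$, $n_0=0$, and $|\pi|=2$. The extra remarks you add (verifying that a rigid graph itself supplies the single spanning rigid subgraph, and noting the vacuous small-$|V(G)|$ case) are fine elaborations but not needed beyond what the paper states.
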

\begin{proof}[\bf Proof]
The proof follows by Theorem~\ref{necondi} when $k=1, l=0, n_0=0$ and $|\pi|=2$.
\end{proof}

\begin{corollary}
Every rigid graph $G$ with $|E(G)|\ge 2(|V(G)|-1)$ has $2$ edge-disjoint spanning trees.
\end{corollary}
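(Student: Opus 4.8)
The plan is to reduce to the Nash--Williams--Tutte criterion (Theorem~\ref{NaTu}) with $l=2$, so that it suffices to check $e_G(\pi)\ge 2(|\pi|-1)$ for every partition $\pi$ of $V(G)$. First I would observe that since $G$ is rigid it contains a spanning minimally rigid subgraph, and hence $G$ contains edge-disjoint $1$ spanning rigid subgraph and $0$ spanning trees (the latter vacuously); therefore Theorem~\ref{necondi} applies with $k=1$, $l=0$ and yields $e_G(\pi)\ge 3(|\pi|-1)-n_0$ for every partition $\pi$ of $V(G)$ with $n_0$ trivial parts.

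Next I would split into two cases. If $\pi$ is not the partition of $V(G)$ into singletons, then some part is nontrivial, so $n_0\le|\pi|-1$ and therefore $e_G(\pi)\ge 3(|\pi|-1)-n_0\ge 3(|\pi|-1)-(|\pi|-1)=2(|\pi|-1)$. If $\pi$ is the partition into singletons, then $|\pi|=|V(G)|$ and $e_G(\pi)=|E(G)|\ge 2(|V(G)|-1)=2(|\pi|-1)$ by the hypothesis. In both cases the Nash--Williams--Tutte condition holds, so $G$ has $2$ edge-disjoint spanning trees.

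I do not expect a genuine obstacle here: the partition bound coming from rigidity already exceeds the target $2(|\pi|-1)$ by the nonnegative slack $(|\pi|-1)-n_0$, which vanishes only for the all-singletons partition, and it is exactly for that exceptional partition that the edge-count hypothesis is invoked (and is sharp, since a minimally rigid graph has only $2|V(G)|-3$ edges). If one prefers to keep the argument self-contained, the inequality $e_G(\pi)\ge 3(|\pi|-1)-n_0$ can be obtained without Theorem~\ref{necondi} by fixing a spanning minimally rigid subgraph $R\subseteq G$ and summing $i_R(X)\le 2|X|-3$ over the nontrivial parts of $\pi$, a short edge count using $|E(R)|=2|V(G)|-3$; the same case split then completes the proof.
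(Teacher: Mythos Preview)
Your proof is correct and follows essentially the same route as the paper: reduce to the Nash--Williams--Tutte criterion, apply Theorem~\ref{necondi} with $k=1$, $l=0$ to get $e_G(\pi)\ge 3(|\pi|-1)-n_0$, and handle the all-singletons partition separately via the edge-count hypothesis. The only difference is cosmetic (the order of the case split) together with your added remark on a self-contained derivation of the partition bound.
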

\begin{proof}[\bf Proof]
By Theorem~\ref{NaTu}, it suffices to show that for any partition $\pi$ of $V(G)$,
$e_G (\pi)\ge 2(|\pi|-1)$. If $|\pi|=|V(G)|$, then $e_G (\pi)=|E(G)|\ge 2(|V(G)|-1)=
2(|\pi|-1)$. Thus we may assume that  $|\pi| < |V(G)|$. Then $n_0\le |\pi|-1$,
where  $n_0$ is the number of trivial parts of $\pi$.
By Theorem~\ref{necondi}, $e_G (\pi)\ge 3(|\pi|-1) - n_0\ge 2(|\pi|-1)$.
\end{proof}

Notice that there is a gap between the sufficient condition (Theorem~\ref{parthm}) and 
necessary condition (Theorem~\ref{necondi}).
The spanning tree packing theorem by Nash-Williams~\cite{Nash61} and Tutte~\cite{Tutte61}
suggests that there might be a partition characterization of packing spanning trees and spanning 
rigid subgraphs. Thus we pose the following problem.
\begin{problem}
Find a partition condition to characterize graphs with edge-disjoint $k$ spanning rigid 
subgraphs and $l$ spanning trees.
\end{problem}

%%%%%%%%%%%%%%%
\section{Sparse subgraph covering}
\label{boundsect}

As a dual problem of packing, subgraph covering also attracts much attention.
Nash-Williams published the following result, characterizing
graphs that can be decomposed to $k$ forests.

\begin{theorem}[Nash-Williams \cite{Nash64}]
\label{nashcov}
Let $k\ge 0$ be an integer.
A connected graph $G$ can be decomposed to $k$ forests if and only if
for any nonempty subset $X\subseteq V(G)$, $i_G(X)\le k(|X|-1)$.
\end{theorem}

Edmonds extended the above forest decomposition theorem to matroids \cite{Edmo65}.
Apply Edmonds theorem to rigidity matroids, we have the following sparse subgraph 
decomposition theorem. Alternatively, a short proof is given here.

\begin{theorem}
\label{rigidcover}
A connected graph $G$ can be decomposed into $k$ sparse subgraphs if and 
only if for any subset $X\subseteq V(G)$ with $|X|\ge 2$, $i_G(X)\le k(2|X| - 3)$.
\end{theorem}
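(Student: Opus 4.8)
The plan is to cast this as a statement about matroid unions. The key observation is that the sparse subgraphs of $G$ (in the sense defined here, i.e. subgraphs $H$ with $i_H(X)\le 2|X|-3$ for all $X$ with $|X|\ge 2$) are exactly the independent sets of the \emph{generic rigidity matroid} $\mathcal{R}(G)$ on the ground set $E(G)$; this is a standard fact (the count $2|X|-3$ is precisely Laman's condition, and by the theorems of Lovász--Yemini / Laman the independent sets of $\mathcal R(G)$ are the graphs satisfying this hereditary count). So "$G$ decomposes into $k$ sparse subgraphs" is equivalent to "$E(G)$ can be partitioned into $k$ independent sets of $\mathcal R(G)$", i.e. $E(G)$ is independent in the $k$-fold matroid union $\bigvee^k \mathcal R(G)$, equivalently $E(G)$ is the union of $k$ bases-or-less, i.e. the whole ground set has rank $|E(G)|$ in $k\mathcal R(G)$.

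First I would state precisely the rigidity-matroid facts needed: (i) $\mathcal R(G)$ is a matroid on $E(G)$; (ii) its rank function satisfies $r_{\mathcal R}(F)\le 2|V(G(F))|-3$ with equality-type combinatorial description, and more usefully, for any $F\subseteq E(G)$ the rank of $F$ is $\min \sum_i (2|V(F_i)|-3)$ over covers of $F$ by edge-sets $F_i$ each spanning a connected piece — actually what I need is just the upper bound $r_{\mathcal R}(F)\le 2|V_F|-3$ and that it is achieved on rigid graphs. Then I apply the Nash-Williams / Edmonds matroid union theorem: $E(G)$ partitions into $k$ independent sets of $\mathcal R(G)$ iff for every $F\subseteq E(G)$, $|F|\le k\, r_{\mathcal R}(F)$.

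The forward direction is immediate: if $G=\bigcup_{i=1}^k H_i$ with each $H_i$ sparse, then for any $X$ with $|X|\ge 2$ we have $i_G(X)=\sum_i i_{H_i}(X)\le \sum_i (2|X|-3)=k(2|X|-3)$. For the converse, by matroid union it suffices to show $|F|\le k\,r_{\mathcal R}(F)$ for all $F\subseteq E(G)$. Here the main obstacle — and the step I expect to require real work — is controlling $r_{\mathcal R}(F)$ from below: the hypothesis only gives us $i_G(X)\le k(2|X|-3)$ for $|X|\ge 2$, whereas $r_{\mathcal R}(F)$ can be strictly smaller than $2|V_F|-3$ when $G(F)$ is not rigid, so a naive bound fails. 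The resolution is to decompose $G(F)$ into its "rigid components" (the maximal rigid subgraphs / the closed sets of the restriction, which partition $F$ when $G(F)$ has at least one edge, into pieces $F_1,\dots,F_t$ on vertex sets $X_1,\dots,X_t$), use that $r_{\mathcal R}(F)=\sum_j r_{\mathcal R}(F_j)=\sum_j(2|X_j|-3)$ for this canonical decomposition, and bound $|F|=\sum_j i_G(X_j)\le \sum_j k(2|X_j|-3)=k\,r_{\mathcal R}(F)$ — being careful with pieces that are single edges or where $|X_j|<2$, and with the role of connectivity of $G$. I would also need to confirm the rank-additivity over rigid components, which is the one genuinely matroid-structural ingredient; alternatively one can avoid invoking the full component theory by choosing $F_1,\dots,F_t$ to be a cover (not necessarily partition) attaining $r_{\mathcal R}(F)$ in Lovász--Yemini's rank formula and arguing directly, at the cost of handling overlaps.

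An alternative, more self-contained route avoids the rigidity matroid entirely and mimics Nash-Williams's original arborescence argument or uses a direct induction/augmenting-path swap: maintain a partition of a subset of $E(G)$ into $k$ sparse subgraphs and show that any uncovered edge can be inserted after a finite sequence of swaps, using the exchange property of sparse graphs ("if $H$ is sparse and $H+e$ is not, there is a unique maximal $X$ with $e\in G[X]$ and $i_{H+e}(X)=2|X|-2$, and removing any edge of $H$ inside that $X$ restores sparsity"). The counting certificate $i_G(X)\le k(2|X|-3)$ is exactly what guarantees the swap process terminates. I would present the matroid-union proof as the main one since it is shortest, and remark that this recovers Theorem~\ref{nashcov} by replacing $\mathcal R(G)$ with the cycle matroid (rank $|X|-1$ on a connected piece), which is the sense in which Theorem~\ref{rigidcover} extends Nash-Williams.
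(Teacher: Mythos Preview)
Your proposal is correct and is essentially the paper's approach: both argue via the $k$-fold matroid union of the rigidity matroid $\mathcal{R}(G)$ together with Edmonds's rank formula, reducing sufficiency to the inequality $|F|\le k\,r_{\mathcal R}(F)$ for every $F\subseteq E(G)$. The paper goes directly through the Lov\'asz--Yemini rank formula (your ``alternative'' route): pick a collection $\mathcal G$ attaining the minimum so that $\{E_F(X):X\in\mathcal G\}$ partitions $F$ and $r_{\mathcal R}(F)=\sum_{X\in\mathcal G}(2|X|-3)$, then $|F|=\sum_{X\in\mathcal G} i_F(X)\le\sum_{X\in\mathcal G} i_G(X)\le k\sum_{X\in\mathcal G}(2|X|-3)=k\,r_{\mathcal R}(F)$; this makes your worry about bounding $r_{\mathcal R}(F)$ from below disappear, and no separate rigid-component structure or rank-additivity lemma is needed.
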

\begin{proof}[\bf Proof of Theorem \ref{rigidcover}]
Suppose that $G$ decomposes into $k$ spanning sparse subgraphs.
By the definition of sparse graphs, for any subset $X\subseteq V(G)$ with $|X|\ge 2$,
$i_G(X)\le k(2|X|-3)$, which proves the necessity.

To prove the sufficiency, assume that for any subset $X\subseteq V(G)$ with $|X|\ge 2$,
$i_G(X)\le k(2|X| - 3)$. It suffices to show the rank of $\mathcal{N}_{k,0}(G)$,
$r_{k,0}(E) \ge |E|$.

Let $F\subseteq E$ be a set that minimizes the right side of (\ref{rankKL}) when $l=0$, then 
\[
r_{k,0}(E)= kr_{\mathcal R}(F) + |E-F|.
\]
By (\ref{rankrm}), there exists a collection $\mathcal{G}$ of subset $X\subseteq V$
such that $\{E_F(X)| X\in\mathcal{G}\}$ partitions $F$ and
\[
r_{\mathcal R}(F)=\sum_{X\in\mathcal{G}} (2|X|-3).
\]
Then $|F|=\sum_{X\in\mathcal{G}}i_F(X)\le \sum_{X\in\mathcal{G}} k(2|X| - 3)
=k\sum_{X\in\mathcal{G}} (2|X|-3)=k r_{\mathcal R}(F)$.
Thus $|E|=|F|+|E-F|\le kr_{\mathcal R}(F) + |E-F| = r_{k,0}(E)$,
which implies that $E$ is an independent set of $\mathcal{N}_{k,0}(G)$. This completes the proof.
\end{proof}

For a graph $G$, the {\bf fractional arboricity} $\gamma(G)$ of $G$ is defined as
\[
\gamma(G)=\max_{X\subseteq V(G)} \frac{i_G(X)}{|X|-1},
\]
whenever the denominate is nonzero.
This notation was introduced by Payan \cite{Paya86} and was generalized to matroids
by Catlin et al. \cite{CGHL92}. The well-known theorem (Theorem~\ref{nashcov}) of 
Nash-Williams on forest covering indicates that $G$ decomposes to $\lceil\gamma(G)\rceil$
forests. When $\gamma(G)=k+\epsilon$ with $0<\epsilon<1$, Nash-williams's theorem tells us
$G$ decomposes to $k+1$ forests but does not give any information on different $\epsilon$
values. Towards this observation, Montassier et al. \cite{MORZ12} posed the following
Nine Dragon Tree Conjecture stating that the maximum degree of one of the forests should
be bounded by a function of $\epsilon$. They also have a Weaker NDT Conjecture if the
degree bounded forest is replaced by a degree bounded subgraph.

\begin{conjecture}[NDT Conjecture \cite{MORZ12}]
If $\gamma(G)=k+\epsilon$ with $0<\epsilon<1$, then $G$ decomposes into $k+1$ forests,
one of which has maximum degree at most $\lceil\frac{(k+1)\epsilon}{1-\epsilon}\rceil$.
\end{conjecture}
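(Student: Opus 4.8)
The plan is first to test whether the matroid‑union machinery behind Theorem~\ref{rigidcover} can be pushed to the NDT Conjecture, and then, where it cannot, to fall back on a direct exchange argument. Translating the hypothesis, $\gamma(G)=k+\epsilon$ with $0<\epsilon<1$ says $i_G(X)\le\lfloor(k+\epsilon)(|X|-1)\rfloor$ for every $X$, so by Theorem~\ref{nashcov} the graph already splits into $k+1$ forests and the entire content is to control the maximum degree of one of them. The tempting move is to introduce a matroid on $E(G)$ whose independent sets are the edge sets decomposing into $k$ forests together with one forest of maximum degree at most $D=\lceil\frac{(k+1)\epsilon}{1-\epsilon}\rceil$, and to prove its rank equals $|E(G)|$ by a counting argument mirroring the proof of Theorem~\ref{rigidcover}: bound $|F|$ on each part of an optimal cover and play it off against the density condition.

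The main obstacle is precisely that this last step is not available: forests of bounded maximum degree are not the independent sets of any matroid, so there is no analogue of Edmonds' rank formula (\ref{rankKL}) to invoke and the counting scheme collapses. I would therefore argue by exchanges. Start from an arbitrary decomposition $(F_1,\dots,F_{k+1})$ into forests, designate $F_{k+1}$ as the forest whose degrees we wish to bound, and choose among all such decompositions one minimizing a suitable potential, say $\sum_{v}\binom{\deg_{F_{k+1}}(v)}{2}$ or a lexicographically sorted degree sequence. Assuming some vertex $v$ has $\deg_{F_{k+1}}(v)>D$, take an edge $e=vu$ in $F_{k+1}$ and try to move it into some $F_i$; this closes a unique cycle in $F_i+e$, so keeping $F_i$ a forest forces one to evict an edge of that cycle and reinsert it elsewhere, generating an alternating walk through the $k+1$ forests rooted at $v$. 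If this walk can never be closed, the set $Y$ of vertices it reaches contains, among the edges it touches, more than $(k+\epsilon)(|Y|-1)$ of them, i.e. $i_G(Y)>\gamma(G)(|Y|-1)$, contradicting $\gamma(G)=k+\epsilon$; otherwise closing it off yields a decomposition of strictly smaller potential, contradicting the choice.

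The delicate points, where I expect the real work to lie, are: (i) choosing a potential for which the exchange process terminates and for which ``closing off'' strictly decreases it; (ii) showing that the alternating structure genuinely traps a \emph{dense} set of vertices rather than leaking edges into forests that stay acyclic, which is what turns a stuck process into a violation of the density bound; and (iii) making the count in (ii) tight enough to reach the exact threshold $\tfrac{(k+1)\epsilon}{1-\epsilon}$ — it is the precise accounting of how much ``excess'' a vertex of degree $>D$ forces, divided by the $(1-\epsilon)$ slack per vertex in the density inequality, that produces the characteristic $\epsilon/(1-\epsilon)$ shape, and settling for a weaker constant is the usual failure mode. I would also note that the matroidal route, though too weak for the full conjecture, does recover results like those of Section~\ref{boundsect}: via Theorem~\ref{rigidcover} one first splits $G$ into $k+1$ sparse subgraphs, each with at most $2n-3$ edges, and then bounds degrees in the resulting pieces by a counting argument through (\ref{rankKL}); this strengthens the \emph{Weaker} NDT Conjecture exactly when $\epsilon$ is large, since then the conjectural $\epsilon/(1-\epsilon)$ bound is itself weak.
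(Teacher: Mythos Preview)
The statement you are attempting to prove is labeled in the paper as a \emph{conjecture}, not a theorem: the paper does not prove it, and at the time of writing it was open. So there is no ``paper's own proof'' to compare against; the paper only cites the NDT Conjecture as motivation for Theorem~\ref{boundedcov}, which addresses a different (and much weaker) quantity.

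Your proposal is not a proof either, and you essentially say so yourself. You correctly diagnose that the matroid-union route of Theorem~\ref{rigidcover} cannot reach the NDT Conjecture because bounded-degree forests do not form a matroid, and you then sketch an exchange/potential argument. But the three ``delicate points'' you flag --- termination of the exchange process, confinement of the alternating structure to a genuinely dense vertex set, and the exact accounting that yields the constant $(k+1)\epsilon/(1-\epsilon)$ rather than something weaker --- are not minor technicalities: they are precisely the obstacles that kept the conjecture open, and your outline supplies no mechanism for any of them. In particular, the leap from ``the walk cannot be closed'' to ``the reached set $Y$ satisfies $i_G(Y)>(k+\epsilon)(|Y|-1)$'' is asserted, not argued; making that implication go through with the right constant is the entire difficulty, and naive exchange arguments of the type you describe are known to give only bounds like $\lceil (k\epsilon+1)/(1-\epsilon)\rceil$ (as in \cite{KiLa11}) or to need $\epsilon\ge 1/2$ (as in \cite{KKWWZ14}). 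Your closing paragraph about recovering Section~\ref{boundsect} via Theorem~\ref{rigidcover} is accurate, but that is the Weaker NDT Conjecture in the large-$\epsilon$ regime, not the NDT Conjecture itself.
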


\begin{conjecture}[Weaker NDT Conjecture \cite{MORZ12}]
If $\gamma(G)=k+\epsilon$ with $0<\epsilon<1$, then $G$ decomposes into $k$ forests and
a subgraph with maximum degree at most $\lceil\frac{(k+1)\epsilon}{1-\epsilon}\rceil$.
\end{conjecture}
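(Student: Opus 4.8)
The plan is to recast the conjectured decomposition as a degree-constrained version of Nash-Williams's forest theorem. Write $d=\lceil\frac{(k+1)\epsilon}{1-\epsilon}\rceil$. A decomposition of $G$ into $k$ forests and a subgraph of maximum degree at most $d$ is the same thing as an edge set $R\subseteq E(G)$ with $\Delta(G(R))\le d$ for which $G(E\setminus R)$ decomposes into $k$ forests. By Theorem~\ref{nashcov} the latter holds exactly when $i_{G-R}(X)=i_G(X)-i_R(X)\le k(|X|-1)$ for every $X$ with $|X|\ge 2$, i.e. when $i_R(X)\ge i_G(X)-k(|X|-1)$ for all such $X$. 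Setting $\mathrm{def}(X)=i_G(X)-k(|X|-1)$, the hypothesis $\gamma(G)=k+\epsilon$ yields $\mathrm{def}(X)\le\epsilon(|X|-1)$ for every $X$ with $|X|\ge 2$. Thus the whole problem reduces to finding a leftover set $R$ that meets the demand $\mathrm{def}(\cdot)$ on every vertex subset while keeping maximum degree at most $d$.

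First I would produce a valid leftover with the most balanced degrees and then show it automatically obeys the bound. Let $\mathcal{M}_k$ be the matroid union of $k$ copies of the circuit matroid $\mathcal{M}(G)$; by the rank formula for matroid union (the graphic, $l=0$ analogue of (\ref{rankKL}), with $\mathcal{M}(G)$ in place of $\mathcal{R}(G)$) its independent sets are precisely the $k$-forest-decomposable edge sets. The family of $R$ for which $E\setminus R$ is independent in $\mathcal{M}_k$ is nonempty (it contains $R=E$), so among all such $R$ I may choose one minimizing the potential $\Phi(R)=\sum_{u\in V(G)}\deg_R(u)^2$. I claim the minimizer satisfies $\Delta(G(R))\le d$. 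If instead some $v$ had $\deg_R(v)\ge d+1$, I would exhibit an augmenting exchange: take an edge $e$ of $R$ incident to $v$, add it to one of the $k$ forests, and along the fundamental cycle it creates delete an edge into $R$ at a vertex of strictly smaller leftover degree. Such an exchange preserves $k$-forest-decomposability of $E\setminus R$ while strictly lowering $\Phi$, contradicting minimality.

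The engine is the existence of a valid exchange at every overloaded vertex, and this is exactly where the constant $d$ must enter. The only obstruction to rerouting an edge $e=uv$ at $v$ is a saturated tight set, a set $X\ni v,u$ with $i_{G-R}(X)=k(|X|-1)$ (so the $k$ forests already span $X$); every such $X$ traps at most $i_R(X)=\mathrm{def}(X)\le\epsilon(|X|-1)$ leftover edges. I would run this as a discharging argument on a minimal counterexample: assign to $v$ a unit of charge for each of its $\ge d+1$ incident leftover edges and send that charge to a saturating tight set blocking the corresponding exchange, then compare the total charge received against the at most $\epsilon(|X|-1)$ capacity of each tight set. Controlling overlaps of the blocking sets around $v$, this count should force $d+1\le\frac{(k+1)\epsilon}{1-\epsilon}$, contradicting the choice of $d$ and so producing an unblocked edge that can be rerouted.

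The main obstacle is matching the bound exactly. Extracting \emph{some} constant depending on $k$ and $\epsilon$ from a crude version of this count is comparatively routine; the difficulty is showing that $\lceil\frac{(k+1)\epsilon}{1-\epsilon}\rceil$ already suffices, which demands a tight accounting of nested and overlapping saturated tight sets clustered at an overloaded vertex, the regime where the $(k+1)$-versus-$(1-\epsilon)$ trade-off is sharp. This is precisely the range the rigidity-matroid method behind Theorem~\ref{rigidcover} cannot reach for small $\epsilon$: that approach is efficient only when the deficiency is large enough to force genuinely rigid (rather than merely forest-like) structure, which is why Theorem~\ref{boundedcov} settles only $\epsilon$ close to $1$. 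I therefore expect the small-$\epsilon$ case with the exact constant to require the full discharging analysis above rather than a direct matroid-rank computation.
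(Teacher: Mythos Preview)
The statement you are attempting to prove is not proved in the paper; it is stated there as an open conjecture (the Weaker NDT Conjecture of Montassier et al.). The paper only records partial progress from the literature---confirmation for $\epsilon\ge 1/2$ in \cite{KKWWZ14} and the near-optimal bound $\lceil(k\epsilon+1)/(1-\epsilon)\rceil$ from \cite{KiLa11}---and its own contribution (Theorem~\ref{boundedcov}) addresses only the large-$\epsilon$ regime via the rigidity-matroid decomposition, with a different and much weaker degree bound $(2|V(G)|-5)/3$. There is therefore no proof in the paper against which to compare your proposal.

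As for the proposal itself, the reformulation via Theorem~\ref{nashcov} and the deficiency function $\mathrm{def}(X)$ is correct, and the potential-minimizing exchange framework is the natural one (and indeed underlies the cited partial results). But the decisive step---that an overloaded vertex always admits an augmenting exchange---is asserted, not proved: you say the discharging count ``should force'' the desired contradiction, and you yourself flag that controlling the overlaps of the blocking tight sets so as to recover the exact constant $\lceil(k+1)\epsilon/(1-\epsilon)\rceil$ is the hard part you have not carried out. Since the entire content of the conjecture is that \emph{this} constant suffices (a cruder bound is, as you note, routine), what you have written is a plausible strategic outline rather than a proof, and the gap you identify is precisely the obstacle that kept the conjecture open at the time the paper was written.
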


The NDT Conjecture was settled by Jiang and Yang \cite{JiYa16}.

Notice that if $\epsilon$ is large (close to $1$), the Weaker NDT Conjecture states that
the maximum degree of the subgraph is bounded by a very large number, which seems give no
information about the subgraph. Motivated by this observation, we are interested in the upper
bound of the maximum degree of the subgraph in general.

For a graph $G$, $\gamma_2 (G)$ of $G$ is defined as
\[
\gamma_2(G)=\max_{X\subseteq V(G)} \frac{i_G(X)}{2|X|-3},
\]
whenever $|X|\ge 2$. We have the following result.

\begin{theorem}
\label{boundedcov}
Let $k,l\ge 0$ be integers with $k+1\le l\le 2k+2$.
If $\gamma_2 (G)\le k+1$, then $G$ decomposes into $l$ forests and
$2k+2 -l$ subgraphs with maximum degree at most $(2|V(G)|-5)/3$.
\end{theorem}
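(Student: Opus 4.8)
The plan is to combine Theorem~\ref{rigidcover} (the sparse-decomposition characterization) with a clean structural observation about sparse graphs, namely that a sparse graph on $m$ vertices has at most $2m-3$ edges, hence average degree less than $4$, but more to the point that one can refine a sparse subgraph into a forest plus a bounded-degree remainder. First I would reformulate the hypothesis $\gamma_2(G)\le k+1$ as the statement that $i_G(X)\le (k+1)(2|X|-3)$ for every $X\subseteq V(G)$ with $|X|\ge 2$. By Theorem~\ref{rigidcover} this gives a decomposition of $G$ into $k+1$ sparse spanning subgraphs $S_1,\dots,S_{k+1}$. The goal is then to extract, out of these $k+1$ sparse pieces, a total of $l$ forests and $2k+2-l$ subgraphs of maximum degree at most $(2n-5)/3$, where $n=|V(G)|$.

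The key step is a lemma of the following shape: every sparse graph $S$ (on the vertex set $V$, $|V|=n$) decomposes into \emph{two} forests, and moreover one of these two forests can be taken to have maximum degree at most $(2n-5)/3$ — or, more usefully for the bookkeeping, every sparse graph decomposes into one forest plus one subgraph of maximum degree at most $(2n-5)/3$. Indeed, a sparse graph has $i_S(X)\le 2|X|-3 \le 2(|X|-1)$ for all $X$, so by Theorem~\ref{nashcov} it decomposes into two forests $F_1,F_2$; then $|E(S)|\le 2n-3$, and if both $F_1$ and $F_2$ had a vertex of degree exceeding $(2n-5)/3$ one pushes edges between the two forests along augmenting paths (a standard forest-rebalancing argument, as in the Nine Dragon Tree literature) to make one of them have maximum degree at most $\lceil(2n-5)/3\rceil$, using that the total edge count $2n-3$ forces $\frac{2n-3}{3}$ as the natural degree threshold for the sparser of three near-balanced parts — here two parts, so $\frac{2n-3}{3}\le \frac{2(n-1)}{3}$ leaves room. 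I would make this precise by the exchange argument: orient each forest, and if $v$ has high degree in both, find a leaf-directed path allowing an edge-swap that strictly decreases the number of high-degree vertices.

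Granting that lemma, the counting finishes the proof. Each of the $k+1$ sparse subgraphs $S_i$ splits as $F_i \cup H_i$ with $F_i$ a forest and $\Delta(H_i)\le (2n-5)/3$; but also each $S_i$ itself splits into two forests, so we have flexibility. To realize exactly $l$ forests with $k+1\le l\le 2k+2$: take $l-(k+1)$ of the sparse subgraphs (there are at most $k+1$ of them to spare, and $l-(k+1)\le k+1$) and split each into two forests, contributing $2(l-(k+1))$ forests; from each of the remaining $(k+1)-(l-(k+1)) = 2k+2-l$ sparse subgraphs take the decomposition $F_i\cup H_i$, contributing one forest and one bounded-degree subgraph each. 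The forest total is $2(l-(k+1)) + (2k+2-l) = l$ and the bounded-degree-subgraph total is $2k+2-l$, as required; note $2k+2-l\ge 0$ and $l-(k+1)\ge 0$ both hold precisely under the hypothesis $k+1\le l\le 2k+2$.

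The main obstacle is the degree-balancing lemma for a single sparse graph: getting the bound $(2n-5)/3$ (rather than the trivial $2n-3$) requires the augmenting-path exchange between the two forests in a two-forest decomposition, and one must check that the exchange terminates and that the threshold $(2n-5)/3$ is actually attainable — this is where the constant $3$ in the denominator and the $-5$ come from, and it is the only nonroutine estimate in the argument. Everything else is the translation via $\gamma_2$, an invocation of Theorems~\ref{rigidcover} and~\ref{nashcov}, and the arithmetic of distributing $k+1$ sparse pieces into $l$ forests and $2k+2-l$ bounded-degree subgraphs.
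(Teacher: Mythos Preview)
Your overall architecture is exactly the paper's: translate $\gamma_2(G)\le k+1$ into $i_G(X)\le (k+1)(2|X|-3)$, apply Theorem~\ref{rigidcover} to split $G$ into $k+1$ sparse pieces, split $l-(k+1)$ of those pieces into two forests each (Nash--Williams), split the remaining $2k+2-l$ pieces into a forest plus a bounded-degree subgraph, and add up. The bookkeeping in your last paragraph is identical to the paper's.

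The only divergence is in how you justify the key step, that a single sparse graph on $n$ vertices decomposes into a forest and a subgraph of maximum degree at most $(2n-5)/3$. The paper does not attempt an exchange argument; it simply invokes Theorem~\ref{kwzthm} (the Kim--Kostochka--West--Wu--Zhu result) with $k=1$ and $d=(2n-5)/3$ and checks the inequality $2(1+d)|X|-(d+2)(2|X|-3)-1 = 3d-2|X|+5\ge 0$, which holds for all $X\subseteq V(G)$ precisely when $d\ge(2n-5)/3$. This is a two-line verification once Theorem~\ref{kwzthm} is available, and it explains where the constant $(2n-5)/3$ comes from.

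Your proposed route --- start from a two-forest decomposition and rebalance via augmenting paths --- is not wrong in spirit, but as written it is only a sketch, and the heuristic you offer for the threshold (``$2n-3$ edges over three near-balanced parts'') does not actually pin down $(2n-5)/3$; there are only two parts, not three, and nothing in a generic exchange argument forces that particular denominator. You correctly flag this as the only nonroutine step, but you have not closed it. The paper's approach sidesteps the difficulty entirely by outsourcing it to Theorem~\ref{kwzthm}, at the cost of depending on a substantial external result; your approach would be more self-contained if it worked, but as it stands the degree-balancing lemma is a genuine gap.
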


We need the following lemmas to prove Theorem~\ref{boundedcov}.

\begin{lemma}[\cite{KKWWZ14}]
\label{kwzthm}
For $d\ge k+1$, if $(k+1)(k+d)|X| - (k+d+1)i_G(X)-k^2\ge 0$ for every nonempty subset $X\subseteq V(G)$, 
then $G$ decomposes into $k$ forests and a subgraph with maximum degree at most $d$.
\end{lemma}

\begin{lemma}
\label{spadec}
Any sparse graph $G$ decomposes into a forest and a subgraph with maximum degree at most $(2|V(G)|-5)/3$.
\end{lemma}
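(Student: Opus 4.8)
The plan is to exploit the defining inequality of a sparse graph, namely $i_G(X) \le 2|X|-3$ for all $X$ with $|X|\ge 2$, and in particular the global bound $|E(G)| \le 2|V(G)|-3$, together with an orientation/discharging argument to split off a forest so that the remaining subgraph has small maximum degree. First I would let $n = |V(G)|$ and set $d = \lfloor (2n-5)/3 \rfloor$ (the floor is harmless since degrees are integers and I will verify the bound $(2n-5)/3$ holds). The key observation is that $G$ being sparse means $\gamma_2(G)\le 1$, so by Theorem~\ref{rigidcover} with $k=1$, $G$ is itself a single sparse subgraph — that is trivially true and gives nothing directly, so instead I would use Theorem~\ref{nashcov}: since $i_G(X)\le 2|X|-3 \le 2(|X|-1)$ for $|X|\ge 2$, we have $\gamma(G)\le 2$, hence $G$ decomposes into two forests $F_1, F_2$. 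This is the starting decomposition.

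Next I would argue that among the two forests I can shift edges between them to balance the degrees. The total number of edges is at most $2n-3$, and a forest on $n$ vertices has at most $n-1$ edges, so in any decomposition into two forests $F_1 \cup F_2$ with $|E(F_1)| \ge |E(F_2)|$ we have $|E(F_2)| \le n-2$ (since if both had $n-1$ edges the total would be $2n-2 > 2n-3$). The strategy is to keep $F_1$ as the forest and take $F_2$ as the subgraph; I must bound $\Delta(F_2)$. Since $|E(F_2)|\le n-2$, the average degree in $F_2$ is less than $2$, but that does not bound the maximum degree. So the real work is an exchange argument: if some vertex $v$ has $\deg_{F_2}(v) > d$, then $v$ has relatively few edges in $F_1$, and I would move an appropriately chosen edge at $v$ from $F_2$ into $F_1$ without creating a cycle in $F_1$ — this is possible as long as not all $F_2$-edges at $v$ close cycles in $F_1$, which I can control because $F_1$ has bounded size. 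One makes this precise by a potential-function argument: among all decompositions of $E(G)$ into a forest plus a subgraph, choose one minimizing $\sum_{v} \binom{\deg_{F_2}(v)}{2}$ (or minimizing $\Delta(F_2)$ and then the number of vertices attaining it), and show that a violating vertex yields an augmenting exchange, contradicting minimality.

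The main obstacle I expect is precisely verifying that the exchange is always available, i.e. that a high-degree vertex $v$ in $F_2$ has some incident $F_2$-edge $vu$ such that adding $vu$ to $F_1$ keeps $F_1$ acyclic (equivalently, $v$ and $u$ lie in different components of $F_1$). If every $F_2$-neighbor of $v$ were in the same $F_1$-component as $v$, that component $C$ would be large, and counting edges inside $V(C)$ using sparsity — $i_G(V(C)) \le 2|V(C)|-3$ — together with the $F_1$-tree edges and the $\deg_{F_2}(v) > d$ edges should overshoot the sparsity bound once $d$ is chosen as $(2n-5)/3$; here the precise choice of the constant $2/3$ and $-5/3$ is what makes the count close. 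So the crux is this local-to-global counting step; the rest is bookkeeping. Once the exchange always works, minimality of the potential forces $\Delta(F_2)\le (2n-5)/3$, completing the proof.
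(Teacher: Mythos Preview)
Your exchange argument has a real gap at exactly the step you flag as the crux. Suppose at an optimal decomposition $F_1$ is a spanning tree and some vertex $v$ has $\deg_{F_2}(v)>d$ with all its $F_2$-neighbours in the (unique) $F_1$-component $C$. Your count gives only
\[
i_G(V(C)) \ge (|V(C)|-1) + (d+1),
\]
while sparsity yields $i_G(V(C))\le 2|V(C)|-3$; this forces merely $d\le |V(C)|-3$. When $|V(C)|=n$ this is $d\le n-3$, which is perfectly compatible with $d>(2n-5)/3$ for all $n\ge 5$, so no contradiction arises and no single edge-move is available. Concretely, take $n=7$, let $F_1$ be the path $v_1v_2\cdots v_7$, and put four extra edges at $v_4$ (to $v_1,v_2,v_6,v_7$); the graph is sparse, $\deg_{F_2}(v_4)=4>(2\cdot 7-5)/3=3$, and every $F_2$-edge at $v_4$ closes a cycle in $F_1$. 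A valid decomposition does exist (e.g.\ take the star at $v_4$ as $F_1$), but reaching it requires genuine augmenting-path exchanges through $F_1$, not the one-sided move you describe; the potential-minimization sketch does not supply this.

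The paper avoids all of this by simply invoking Theorem~\ref{kwzthm} with $k=1$ and $d=(2|V(G)|-5)/3$: for $|X|\ge 2$ one checks
\[
2(1+d)|X|-(d+2)i_G(X)-1 \ge 2(1+d)|X|-(d+2)(2|X|-3)-1 = 3d-2|X|+5 \ge 0,
\]
since $|X|\le |V(G)|$; the case $|X|=1$ is trivial. That theorem already encapsulates the delicate exchange/augmenting machinery you are gesturing at, and the specific bound $(2n-5)/3$ is precisely what makes $3d-2n+5\ge 0$, not what makes your local count close.
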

\begin{proof}[\bf Proof]
It suffices to show that any sparse graph satisfies the condition $(k+1)(k+d)|X| - (k+d+1)i_G(X)-k^2\ge 0$
when $k=1$ and $d=(2|V(G)|-5)/3$ in Lemma \ref{kwzthm}. By definition, for any sparse graph $G$ and
$X\subseteq V(G)$ with $|X|\ge 2$, $i_G(X)\le 2|X|-3$. Then
$2(1+d)|X| - (d+2)i_G(X)-1\ge 2(1+d)|X| - (d+2)(2|X|-3)-1 = 3d -2|X| +5\ge 0$, completing the proof.
\end{proof}

\begin{lemma}
\label{spafore}
Any sparse graph decomposes into two forests.
\end{lemma}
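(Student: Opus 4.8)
The plan is to apply the Nash--Williams forest-covering theorem (Theorem~\ref{nashcov}) directly. We must verify that a sparse graph $G$ satisfies the arboricity condition $i_G(X)\le 2(|X|-1)$ for every nonempty $X\subseteq V(G)$. First I would dispose of the trivial case $|X|=1$, where $i_G(X)=0=2(|X|-1)$. For $|X|\ge 2$, the definition of sparseness gives $i_G(X)\le 2|X|-3 < 2|X|-2 = 2(|X|-1)$, so the condition holds (in fact strictly).

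The only remaining subtlety is connectivity: Theorem~\ref{nashcov} as stated applies to a connected graph $G$. If $G$ is disconnected, I would apply the theorem to each connected component separately---each component is itself sparse and hence decomposes into two forests---and then take the union of the corresponding forest classes across components, which is again a forest since components share no edges or vertices. Alternatively one can simply note that the forest-covering characterization extends verbatim to disconnected graphs (a graph decomposes into $l$ forests iff $i_G(X)\le l(|X|-1)$ for all nonempty $X$), since adding edges to make $G$ connected only enlarges the family of subsets to check. Either route closes the argument.

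I do not anticipate a genuine obstacle here; the lemma is essentially a one-line consequence of Nash--Williams, with the $-3$ in the sparseness bound providing slack against the $2(|X|-1)$ threshold. The only thing to be careful about is not to claim equality---sparse graphs are strictly below the arboricity-$2$ ceiling on every subset, which is consistent with the fact (used implicitly elsewhere in Section~\ref{boundsect}) that even minimally rigid graphs, with exactly $2|V(G)|-3$ edges, still fit inside two forests. This lemma together with Lemma~\ref{spadec} will then feed into the proof of Theorem~\ref{boundedcov}: decomposing $G$ into $k+1$ sparse subgraphs via Theorem~\ref{rigidcover}, and then splitting each sparse subgraph either into two forests (Lemma~\ref{spafore}) or into a forest plus a degree-bounded subgraph (Lemma~\ref{spadec}), according to how many of the $2k+2$ parts are required to be forests.
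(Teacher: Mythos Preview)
Your proposal is correct and is exactly the approach the paper takes: the paper's proof is the single sentence that the lemma ``follows easily from Theorem~\ref{nashcov} and from the definition of sparse graphs,'' and you have simply spelled out that computation (including the harmless connectivity caveat). Nothing more is needed.
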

\begin{proof}[\bf Proof]
The lemma follows easily from Theorem~\ref{nashcov} and from the definition of sparse graphs.
\end{proof}

\begin{proof}[\bf Proof of Theorem~\ref{boundedcov}]
As $\gamma_2(G)\le k+1$, we have $i_G(X)\le (k+1)(2|X|-3)$.
By Theorem~\ref{rigidcover}, $G$ decomposes into $k+1$ sparse subgraphs.
By Lemma~\ref{spafore}, $l-k-1$ sparse subgraphs decompose into $2l-2k-2$ forests.
By Lemma~\ref{spadec}, the other $2k+2-l$ sparse subgraphs decompose into $2k+2-l$ forests and
$2k+2-l$ subgraphs with maximum degree at most $(2|V(G)|-5)/3$. Thus $G$ can decompose 
into $l$ forests and $2k+2-l$ subgraphs with maximum degree at most $(2|V(G)|-5)/3$.
\end{proof}

As an analogue of Nine Dragon Tree problem, we pose the following sparse subgraph
covering problem.

\begin{problem}
Find a minimum integer $f(k,\epsilon)$ such that if $\gamma_2(G)=k+\epsilon$ 
with $0<\epsilon<1$, then $G$ decomposes into $k+1$ sparse subgraphs,
one of which has maximum degree at most $f(k,\epsilon)$.
\end{problem}

%%%%%%%%%%%
\section{Acknowledgment}
The author would like to thank Viet Hang Nguyen for pointing out a mistake in a previous version.
The author is partially supported by a grant from the Simons Foundation (No. 522728, XG).

\end{document}